\numberwithin{equation}{section}
 \newtheorem{theorem}{Theorem}[section]
\newtheorem{remark}[theorem]{Remark}
\theoremstyle{definition}
\newtheorem{definition}{Definition}[section]
\begin{document}

\thispagestyle{empty}

\author{{Beih S. El-Desouky,$^{}$ $$\footnote{Corresponding author: b\_desouky@yahoo.com } { } Rabab S. Gomaa and Alia M. Magar
\newline{\it{{}  }}
 } { }\vspace{.2cm}\\
  \it Department of Mathematics, Faculty of Science, Mansoura University, \\35516 Mansoura, Egypt \vspace{.2cm} \\
{ }
%$^{2}$ \small \it Institute for Analysis and Numerics, Otto-von-Guericke University Magdeburg,\\
% \small \it Universit\"{a}tsplatz 2, D-39106 Magdeburg, Germany \vspace{.2cm} \\
}
%\author{{Beih S. El-Desouky,$^{1}$ $$ \footnote{Corresponding author: b\_desouky@yahoo.com }
%{ } Rabab S. Gomaa, { } Abdelfattah Mustafa { }and Alia M. Magar
%\newline{\it{{}  }}
% } { }\vspace{.2cm}\\
\title{The 2-Variable Unified Family of Generalized Apostol-Euler, Bernoulli and Genocchi Polynomials}
 %for a system containing an inert substance: Existence, Uniqueness and Mass conservation}

%%\date{\today}
\maketitle

%%%%%%%%%%%%%%%%%%%%%%%%%%%% Einleitung %%%%%%%%%%%%%%%%%
\hrule \vskip 8pt

\begin{quote}
{\small {\em\bf Abstract}\\{In this paper, we introduce The 2-variable unified family of generalized Apostol-Euler, Bernoulli and Genocchi polynomials and derive some implicit summation formulae and general
symmetry identities. The result extend some known
summations and identities of generalized Bernoulli, Euler and
Genocchi numbers and polynomials.}}
\end{quote}

{\bf Keywords:} 2-variable general polynomials, Apostol-Genocchi polynomials, summation formulae, symmetric identities.\\

\section{Introduction}
 The 2-variable general polynomials (2VGP) $p_{n}(x, y)$ are defined by means of the
following generating function \cite{Khan}:
\begin{equation}\label{1}
  e^{xt}\; \varphi(y,t)=\sum_{n=0}^{\infty}p_{n}(x, y)\frac{t^{n}}{n!},\;p_{0}(x, y)=1,
\end{equation}
where $\varphi(y, t)$ has (at least the formal) series expansion
\begin{equation}\label{2}
  \varphi(y,t)=\sum_{n=0}^{\infty}\varphi_{n}(y)\frac{t^{n}}{n!},\;\varphi_{0}(y)\neq0.
\end{equation}
 The 2-variable general polynomials $p_{n}(x, y)$ contains a number of important special polynomials of two variables.\\
Generating functions for certain members belonging to the (2VGP) are given as following:\\
The higher order Hermite polynomials, sometimes called the Kampé de Feriet polynomials of order m or the Gould–
Hopper polynomials $H_{n}^{(m)}(x, y)$ defined by the generating function\cite{Gould}
\begin{equation}\label{3}
  e^{xt+yt^{m}}=\sum_{n=0}^{\infty}{H}_{n}^{(m)}(x,y)\frac{t^{n}}{n!}.
\end{equation}
The 2-variable Hermite Kampé de Feriet polynomials $H_{n}(x, y)$ defined
by the generating function \cite{Appell}
\begin{equation}\label{4}
  e^{xt+yt^{2}}=\sum_{n=0}^{\infty}{H}_{n}(x,y)\frac{t^{n}}{n!}.
\end{equation}
The 2-variable generalized Laguerre polynomials $_{m}L_{n}(y, x)$ are defined by the following generating function \cite{Dattoli2}
\begin{equation}\label{5}
 e^{xt}C_{0}(-yt^{m})=\sum_{n=0}^{\infty}  {}_{m}L_{n}(y, x)  \frac{t^{n}}{n!},
\end{equation}
where $C_{0}(y)$ is the 0-th order Tricomi function\cite{Andrews}
\begin{equation}\label{6}
  C_{0}(y)=\sum_{r=0}^{\infty}\frac{(-1)^{r} x^{r}}{(r!)^{2}}.
\end{equation}
The 2-variable Laguerre polynomials $L_{n}(y, x)$ are defined by the following generating function \cite{Dattoli1}
\begin{equation}\label{7}
 e^{xt}C_{0}(yt)=\sum_{n=0}^{\infty}  L_{n}(y, x)  \frac{t^{n}}{n!}.
\end{equation}
The 2-variable truncated exponential polynomials of order r \;\;$e_{n}^{(r)}(x, y)$ are defined by the generating function \cite{Dattoli4}
\begin{equation}\label{9}
  \frac{e^{xt}}{(1-yt^{r})}=\sum^{\infty}_{n=0} e_{n}^{(r)}(x,y)\;\frac{t^{n}}{n!}.
\end{equation}
In particular, we note that
\begin{equation*}
   e_{n}^{(2)}(x,y)=n!\;\;{}_{[2]}e_{n}(x,y),
\end{equation*}
\begin{equation*}
{}_{[2]}e_{n}(x,1)={}_{[2]}e_{n}(x),
\end{equation*}
where ${}_{[2]}e_{n}(x,y)$ denotes the 2-variable truncated exponential polynomials \cite{Dattoli3}.\\
The 2-variable truncated exponential polynomials \;\;${}_{[2]}e_{n}^{(r)}(x, y)$ are defined by the generating function \cite{Dattoli3}
\begin{equation}\label{10}
  \frac{e^{xt}}{(1-yt^{2})}=\sum^{\infty}_{n=0} {}_{[2]}e_{n}(x,y)\;\frac{t^{n}}{n!}.
\end{equation}
The generalized Apostol-Bernoulli polynomials $ B_{n}^{(\alpha)}(x;\lambda)$ of order $\alpha\in \mathbb{C},$ the generalized Apostol-Euler polynomials
$ E_{n}^{(\alpha)}(x;\lambda)$ of order $\alpha\in \mathbb{C}$ and the generalized Apostol-Genocchi polynomials
$ G_{n}^{(\alpha)}(x;\lambda)$ of order $\alpha\in \mathbb{C}$ are defined, see (\cite{lu1}, \cite{lu2}, \cite{lu3} and \cite{lu5}) respectively, through the generating function by:
\begin{equation*}
 \left( \frac{t}{\lambda e^{t}-1}\right)^{\alpha}\;e^{xt}=
 \sum_{n=0}^{\infty}B_{n}^{^{(\alpha)}}(x;\lambda)\frac{t^{n}}{n!},
\end{equation*}
\begin{equation}\label{20}
 (\mid t\mid <2\pi,\;\; \text{when}\;\; \lambda=1;\;\; \mid t\mid <\mid\log \lambda\mid,\;\; \text{when}\;\; \lambda\neq 1,\;\;1^{\alpha}:=1)
\end{equation}
\begin{equation*}
 \left( \frac{2}{\lambda e^{t}+1}\right)^{\alpha}\;e^{xt}=
 \sum_{n=0}^{\infty}E_{n}^{^{(\alpha)}}(x;\lambda)\frac{t^{n}}{n!},
\end{equation*}
\begin{equation}\label{21}
 (\mid t\mid <\pi,\;\; \text{when}\;\; \lambda=1;\;\; \mid t\mid <\mid\log (-\lambda)\mid,\;\; \text{when}\;\; \lambda\neq 1,\;\;1^{\alpha}:=1)
\end{equation}
and
\begin{equation*}
 \left( \frac{2t}{\lambda e^{t}+1}\right)^{\alpha}\;e^{xt}=
 \sum_{n=0}^{\infty}G_{n}^{^{(\alpha)}}(x;\lambda)\frac{t^{n}}{n!},
\end{equation*}
\begin{equation}\label{22}
 (\mid t\mid <\pi,\;\; \text{when}\;\; \lambda=1;\;\; \mid t\mid <\mid\log(- \lambda)\mid,\;\; \text{when}\;\; \lambda\neq 1,\;\;1^{\alpha}:=1)
\end{equation}
Khan at el. \cite{Khan2} introduced the 2-variable Apostol type polynomials of order $\alpha,$ by means of the following generating function:
\begin{equation}\label{23}
  \sum_{n=0}^{\infty}{}_{p}F_{n}(x,y;\lambda;\mu,\nu)\frac{t^{n}}{n!}=(\frac{2^{\mu}t^{\nu}}{\lambda e^{t}+1})^{\alpha}e^{xt}\varphi(y,t),\;\;\; |t|<|\log(-\lambda)|.
\end{equation}

In this article, we introduce the 2-variable unified family of generalized Apostol-Euler, Bernoulli and Genocchi polynomials
and investigate their properties. Some important summation formulas are
given. Moreover, general symmetric identity are derived .

\section{The 2-variable unified family of generalized Apostol-Euler, Bernoulli and Genocchi polynomials}
The 2-variable unified family of generalized Apostol-Euler, Bernoulli and Genocchi polynomials of order $r,$ denoted by $ {}_{p}\mathbb{M}_{n}^{^{(r)}}(x,k,a,b;\bar\alpha_{r})$ will be defined as the discrete Apostol type convolution of the 2-variable general polynomials $ p_{n}(x, y)$.
\begin{definition}
Let $a, b$ and $c$ be positive integers with the condition $a\neq b$. A new generalization of the Apostol Hermite-Genocchi polynomials
$\mathbb{}_{H}{M}_{n}^{^{(r)}}(x,y;a,b,c;\bar\alpha_{r})$ for nonnegative integer $n$ is defined by means by the generating function
\begin{equation}\label{n}
\sum_{n=0}^{\infty}\mathbb{}_{H}{M}_{n}^{^{(r)}}(x,y;k,a,b;\bar\alpha_{r})\frac{t^{n}}{n!}=\frac{(-1)^{r}t^{rk}2^{r(1-k)}}{\prod\limits^{r-1}_{i=0}(\alpha_{i}b^{t}-a^{t})} \,\,c^{xt}\varphi (y,t),
\end{equation}
\end{definition}

Setting $ c=e$  and $\varphi (y,t)=1 $ in \eqref{n}, we get the following definition.
\begin{definition}
A unified family $ \mathbb{M}_{n}^{(r)}(x;a,b;\bar\alpha_{r})$ of generalized Apostol-Euler,Bernoulli and Genocchi polynomials is given by means of the generating function
\begin{equation*}
\sum_{n=0}^{\infty}\mathbb{M}_{n}^{^{(r)}}(x;k,a,b;\bar\alpha_{r})\frac{t^{n}}{n!}=\frac{(-1)^{r}t^{rk}2^{r(1-k)}}{\prod\limits^{r-1}_{i=0}(\alpha_{i}b^{t}-a^{t})} \,\,e^{xt},
\end{equation*}
\begin{equation}\label{11}
 \left( \mid t\mid <\mid \frac{\log(\alpha_{i})}{\log(\frac{b}{a})}\mid;\,\, a,b\in \mathbb{R}^{+};\,\, \alpha_{i}\neq1;\,\,for all\,\, i=0,1,\cdots,r-1\right),
\end{equation}
where $r\in\mathbb{C};\,\, \bar\alpha_{r}=(\alpha_{0},\alpha_{1},\cdots,\alpha_{r-1})$ is a sequence of complex numbers.
\end{definition}
\begin{remark}
If we set $x=0$ in \eqref{11}, then we obtain the new unified family of generalized Apostol-Euler, Bernoulli and Genocchi numbers, defined as\\
\begin{equation}\label{12}
  \sum_{n=0}^{\infty}\mathbb{M}_{n}^{^{(r)}}(k,a,b;\bar\alpha_{r})\frac{t^{n}}{n!}=\frac{(-1)^{r}t^{rk}2^{r(1-k)}}{\prod\limits^{r-1}_{i=0}(\alpha_{i}b^{t}-a^{t})}.
\end{equation}
\end{remark}

Setting $ c=e$ in \eqref{n}, we get the following definition.
\begin{definition}
The 2-variable unified family of generalized Apostol-Euler, Bernoulli and Genocchi polynomials of order $(r)$ ${}_{p}\mathbb{M}_{n}^{^{(r)}}(x,,y;k,a,b;\bar\alpha_{r})$ is defined by the following generating function
\begin{equation}\label{13}
\sum_{n=0}^{\infty}{}_{p}\mathbb{M}_{n}^{^{(r)}}(x,y;k,a,b;\bar\alpha_{r})\frac{t^{n}}{n!}=\frac{(-1)^{r}t^{rk}2^{r(1-k)}}{\prod\limits^{r-1}_{i=0}(\alpha_{i}b^{t}-a^{t})} \,\,e^{xt}\varphi (y,t),
\end{equation}
\end{definition}

\subsection{Special cases}
The generating function in (\eqref{13})gives types of polynomials as special cases, for example
\begin{enumerate}
  \item setting $\alpha_{i}=-\lambda,\;b=e\; and\;a=1 \; in \;\eqref{13},$ we have
\begin{eqnarray*}
% \nonumber to remove numbering (before each equation)
 \sum_{n=0}^{\infty}{}_{p}\mathbb{M}_{n}^{^{(r)}}(x,y;k,1;-\lambda)\frac{t^{n}}{n!}&=& \frac{(-1)^{r} 2^{r(1-k)}t^{tk}}{(-\lambda e^{t}-1)^{r}}e^{xt}\;\;\varphi(y,t)  \\
&=& \left(\frac{2^{1-k}t^{k}}{\lambda e^{t}+1}\right)^{r}e^{xt}\;\;\varphi(y,t)  \\
  &=&  \sum_{n=0}^{\infty}{}_{p}F_{n}^{^{(r)}}(x,y;\lambda,\mu,\nu)\frac{t^{n}}{n!},
\end{eqnarray*}
Thus, equating the coefficients of $t^{n}$ on both sides, we get\\
  ${}_{p}\mathbb{M}_{n}^{^{(r)}}(x,y;k,1;-\lambda)={}_{p}F_{n}^{^{(r)}}(x,y;\lambda,\mu,\nu)$\; where $\mu=1-k,\;\nu=k$.\\
( The 2-variable Apostol type polynomials of order $\alpha,$).

  \item setting $\alpha_{i}=\lambda,\;b=e,\;a=1\;and\; k=1  \; in\; \eqref{13},$ we have
\begin{eqnarray*}
% \nonumber to remove numbering (before each equation)
 \sum_{n=0}^{\infty}{}_{p}\mathbb{M}_{n}^{^{(r)}}(x,y;1,1,e;\lambda)\frac{t^{n}}{n!}&=& \frac{(-1)^{r}2^{0} t^{r}}{(\lambda e^{t}-1)^{r}}e^{xt}\;\;\varphi(y,t)  \\
&=& (-1)^{r}\left(\frac{t}{\lambda e^{t}-1}\right)^{r}e^{xt}\;\;\varphi(y,t)  \\
  &=& (-1)^{r} \sum_{n=0}^{\infty}{}_{p}B_{n}^{^{(r)}}(x,y;\lambda)\frac{t^{n}}{n!},
\end{eqnarray*}
Thus, equating the coefficients of $t^{n}$ on both sides, we get\\
  ${}_{p}\mathbb{M}_{n}^{^{(r)}}(x,y;1,1,e;\lambda)=(-1)^{r}{}_{p}B_{n}^{^{(r)}}(x,y;\lambda)$.\\
( The 2-variable Apostol – Bernoulli polynomials of order $r$).

  \item setting $\alpha_{i}=-\lambda,\;b=e,\;a=1 \; and\; k=0\; in \;\eqref{13},$ we have
\begin{eqnarray*}
% \nonumber to remove numbering (before each equation)
 \sum_{n=0}^{\infty}{}_{p}\mathbb{M}_{n}^{^{(r)}}(x,y;0,1,e;\lambda)\frac{t^{n}}{n!}&=& \frac{(-1)^{r}2^{r} t^{0}}{(-\lambda e^{t}-1)^{r}}e^{xt}\;\;\varphi(y,t)  \\
&=&\left(\frac{2}{\lambda e^{t}+1}\right)^{r}e^{xt}\;\;\varphi(y,t)  \\
  &=&  \sum_{n=0}^{\infty}{}_{p}E_{n}^{^{(r)}}(x,y;\lambda)\frac{t^{n}}{n!},
\end{eqnarray*}
Thus, equating the coefficients of $t^{n}$ on both sides, we get\\
  ${}_{p}\mathbb{M}_{n}^{^{(r)}}(x,y;0,1,e;\lambda)={}_{p}E_{n}^{^{(r)}}(x,y;\lambda)$.\\
( The 2-variable Apostol - Euler polynomials of order $r$).
  \item setting $\alpha_{i}=-\lambda,\;b=e,\;a=1 \;and\;k=1 in \eqref{13},$ we have
\begin{eqnarray*}
% \nonumber to remove numbering (before each equation)
 \sum_{n=0}^{\infty}{}_{p}\mathbb{M}_{n}^{^{(r)}}(x,y;1,1,e;\lambda)\frac{t^{n}}{n!}&=& \frac{(-1)^{r}2^{0} t^{r}}{(-\lambda e^{t}-1)^{r}}e^{xt}\;\;\varphi(y,t)  \\
&=& (2)^{-r}\left(\frac{2t}{\lambda e^{t}+1}\right)^{r}e^{xt}\;\;\varphi(y,t)  \\
  &=& (2)^{-r} \sum_{n=0}^{\infty}{}_{p}G_{n}^{^{(r)}}(x,y;\lambda)\frac{t^{n}}{n!},
\end{eqnarray*}
Thus, equating the coefficients of $t^{n}$ on both sides, we get\\
  ${}_{p}\mathbb{M}_{n}^{^{(r)}}(x,y;1,1,e;-\lambda)=(2)^{-r}{}_{p}G_{n}^{^{(r)}}(x,y;\lambda)$.\\
( The 2-variable Apostol - Genocchi polynomials of order $r$).

\end{enumerate}

Now, we obtain the series definition of  ${}_{p}\mathbb{M}_{n}^{^{(r)}}(x,y;k,a,b;\bar\alpha_{r})$ by the following theorem:
\begin{theorem}
The 2-variable unified family of generalized Apostol-Euler, Bernoulli and Genocchi polynomials of order $(r)$ ${}_{p}\mathbb{M}_{n}^{^{(r)}}(x,y;k,a,b;\bar\alpha_{r})$ is defined by the series:
\begin{equation}\label{25}
  {}_{p}\mathbb{M}_{n}^{^{(r)}}(x,y;k,a,b;\bar\alpha_{r})=\sum_{k=0}^{n} \binom {n}{k}\mathbb{M}_{n-k}^{(r)}(a,b,\bar\alpha_{r})p_{k}(x,y).
\end{equation}
\end{theorem}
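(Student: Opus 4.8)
The plan is to read the identity \eqref{25} directly off the generating functions via a Cauchy-product argument, since the series formula is exactly the statement that $\,_{p}\mathbb{M}_{n}^{(r)}$ is the binomial convolution of the number sequence $\mathbb{M}_{n}^{(r)}$ with the $2$-variable general polynomials $p_{n}(x,y)$. No new analytic input is needed beyond the three generating functions already in hand.

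First I would observe that the right-hand side of the defining relation \eqref{13} factors as a product of two series already introduced. Indeed, the factor $\frac{(-1)^{r}t^{rk}2^{r(1-k)}}{\prod_{i=0}^{r-1}(\alpha_i b^t - a^t)}$ is, by \eqref{12}, precisely $\sum_{m=0}^{\infty}\mathbb{M}_{m}^{(r)}(a,b,\bar\alpha_r)\frac{t^m}{m!}$, while the remaining factor $e^{xt}\varphi(y,t)$ is, by \eqref{1}, precisely $\sum_{j=0}^{\infty}p_j(x,y)\frac{t^j}{j!}$. Hence \eqref{13} can be rewritten as
\begin{equation*}
\sum_{n=0}^{\infty}{}_{p}\mathbb{M}_{n}^{(r)}(x,y;k,a,b;\bar\alpha_r)\frac{t^n}{n!}=\left(\sum_{m=0}^{\infty}\mathbb{M}_{m}^{(r)}(a,b,\bar\alpha_r)\frac{t^m}{m!}\right)\left(\sum_{j=0}^{\infty}p_j(x,y)\frac{t^j}{j!}\right).
\end{equation*}

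Next I would form the Cauchy product of the two series on the right. Multiplying term by term and grouping by total degree $n=m+j$, the coefficient of $t^n$ becomes $\sum_{j=0}^{n}\frac{1}{(n-j)!\,j!}\,\mathbb{M}_{n-j}^{(r)}(a,b,\bar\alpha_r)\,p_j(x,y)$; inserting the factor $\frac{n!}{n!}$ converts $\frac{n!}{(n-j)!\,j!}$ into the binomial coefficient $\binom{n}{j}$, so that the right-hand side equals $\sum_{n=0}^{\infty}\Bigl(\sum_{j=0}^{n}\binom{n}{j}\mathbb{M}_{n-j}^{(r)}(a,b,\bar\alpha_r)\,p_j(x,y)\Bigr)\frac{t^n}{n!}$.

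Finally, equating the coefficients of $\frac{t^n}{n!}$ on both sides yields \eqref{25}, after renaming the dummy summation index $j$ back to $k$ as written in the statement. I do not expect any genuine obstacle here: the argument is the standard fact that a product of exponential generating functions encodes a binomial convolution, and the only point requiring a little care is the rearrangement of the double series into diagonals, which is justified within the common region of convergence specified in \eqref{11}.
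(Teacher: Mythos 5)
Your proposal is correct and follows essentially the same route as the paper: both rewrite the generating function \eqref{13} as the product of the number-sequence generating function \eqref{12} with the $2$-variable general polynomial generating function \eqref{1}, apply the Cauchy product, and equate coefficients. If anything, your write-up is the cleaner of the two, since the paper's version reuses $k$ simultaneously as the parameter and the summation index and contains index slips ($p_n$ where $p_k$ is meant) that your choice of dummy indices $m$, $j$ avoids.
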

\begin{proof}
Using Eq.\eqref{n}, we have
\begin{eqnarray*}
% \nonumber to remove numbering (before each equation)
 \sum_{n=0}^{\infty}{}_{p}\mathbb{M}_{n}^{^{(r)}}(x,y;k,a,b;\bar\alpha_{r})\frac{t^{n}}{n!}  &=&\frac{(-1)^{r}t^{rk}2^{r(1-k)}}{\prod\limits^{r-1}_{i=0}(\alpha_{i}b^{t}-a^{t})} \,\,\sum_{n=0}^{\infty}p_{n}(x,y)\frac{t^{n}}{n!}  \\
   &=& \sum_{n=0}^{\infty}\mathbb{M}_{n}^{^{(r)}}(a,b;\bar\alpha_{r})\frac{t^{k}}{k!}   \,\,\sum_{n=0}^{\infty}p_{n}(x,y)\frac{t^{n}}{n!}.
\end{eqnarray*}
By using Cauchy-product rule, we find
\begin{equation*}
 \sum_{n=0}^{\infty}{}_{p}\mathbb{M}_{n}^{^{(r)}}(x,y;k,a,b;\bar\alpha_{r})\frac{t^{n}}{n!}=\sum_{n=0}^{\infty}\sum_{k=0}^{n}\frac{\mathbb{M}_{n-k}^{^{(r)}}(a,b;\bar\alpha_{r})}{(n-k)!k!}p_{n}(x,y)t^{n}.
\end{equation*}
Equating the coefficients of the same powers of $t$ ,  yields \eqref{25}.
\end{proof}
\section{Implicit Summation Formulae for The 2-Variable Unified Family of Generalized Apostol-Euler, Bernoulli and Genocchi polynomials}
 \begin{theorem}
Let $a,\; b > 0$ and $a\neq b$. Then for $x, y, z \in \textbf{R}$ and $n \geq 0$.
The following implicit summation formula for ${}_{p}\mathbb{M}_{n}^{^{(r)}}(x,y;k,a,b;\bar\alpha_{r}) $  holds true:
\begin{equation}\label{26q}
{}_{p}\mathbb{M}_{n}^{^{(r)}}(x+z,y;k,a,b;\bar\alpha_{r})=\sum_{m=0}^{n}{}_{p}\mathbb{M}_{n}^{^{(r)}}(x,y;k,a,b;\bar\alpha_{r})z^{n-m}.
\end{equation}
\end{theorem}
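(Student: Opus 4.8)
The plan is to read the identity straight off the defining generating function \eqref{13}, exploiting only the additive law $e^{(x+z)t}=e^{xt}e^{zt}$ for the exponential. The key observation is that neither the factor $\varphi(y,t)$ nor the denominator $\prod_{i=0}^{r-1}(\alpha_{i}b^{t}-a^{t})$ depends on $x$, so under the shift $x\mapsto x+z$ only the factor $e^{xt}$ is affected; all the structural data of the family survive untouched.

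Concretely, I would first substitute $x\mapsto x+z$ in \eqref{13}, giving
\[
\sum_{n=0}^{\infty}{}_{p}\mathbb{M}_{n}^{^{(r)}}(x+z,y;k,a,b;\bar\alpha_{r})\frac{t^{n}}{n!}=\frac{(-1)^{r}t^{rk}2^{r(1-k)}}{\prod\limits^{r-1}_{i=0}(\alpha_{i}b^{t}-a^{t})}\,e^{(x+z)t}\varphi (y,t).
\]
I would then split $e^{(x+z)t}=e^{xt}e^{zt}$ and recognize the product of the prefactor with $e^{xt}\varphi(y,t)$ as precisely the generating function of the unshifted polynomials ${}_{p}\mathbb{M}_{m}^{^{(r)}}(x,y;k,a,b;\bar\alpha_{r})$. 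The right-hand side thus factors into a product of two power series,
\[
\left(\sum_{m=0}^{\infty}{}_{p}\mathbb{M}_{m}^{^{(r)}}(x,y;k,a,b;\bar\alpha_{r})\frac{t^{m}}{m!}\right)\left(\sum_{j=0}^{\infty}\frac{(zt)^{j}}{j!}\right),
\]
and applying the Cauchy product rule (exactly as in the proof of the series formula \eqref{25}) and equating the coefficients of $t^{n}/n!$ on both sides yields
\[
{}_{p}\mathbb{M}_{n}^{^{(r)}}(x+z,y;k,a,b;\bar\alpha_{r})=\sum_{m=0}^{n}\binom{n}{m}{}_{p}\mathbb{M}_{m}^{^{(r)}}(x,y;k,a,b;\bar\alpha_{r})\,z^{n-m}.
\]

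This is the content of \eqref{26q}; I note that the computation forces the binomial weight $\binom{n}{m}$ together with the index $m$ on the inner polynomial, both of which are absent from the displayed statement and should be restored there. The argument is purely formal and presents no genuine obstacle: the only point demanding care is the factorial bookkeeping in the Cauchy product, where the weights $1/\big(m!\,(n-m)!\big)$ must be recombined into $\binom{n}{m}/n!$. Convergence is automatic at the level of formal power series, and for analytic validity one simply restricts $t$ to the disk prescribed in \eqref{11}.
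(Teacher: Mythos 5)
Your proof is correct and takes essentially the same route as the paper's own proof: substitute $x\mapsto x+z$ in the generating function \eqref{13}, factor $e^{(x+z)t}=e^{xt}e^{zt}$, recognize the product of the generating function of ${}_{p}\mathbb{M}_{m}^{^{(r)}}(x,y;k,a,b;\bar\alpha_{r})$ with the series for $e^{zt}$, and equate coefficients via the Cauchy product. You are also right that the displayed formula \eqref{26q} is defective: it omits the binomial coefficient $\binom{n}{m}$ and mislabels the index of the polynomial inside the sum, and the paper's own computation, like yours, actually yields ${}_{p}\mathbb{M}_{n}^{^{(r)}}(x+z,y;k,a,b;\bar\alpha_{r})=\sum_{m=0}^{n}\binom{n}{m}\,{}_{p}\mathbb{M}_{m}^{^{(r)}}(x,y;k,a,b;\bar\alpha_{r})\,z^{n-m}$, so these are typographical errors in the theorem statement rather than in the argument.
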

\begin{proof}
Replacement of $x$ by $x+z$ in generating function \eqref{13} gives
\begin{eqnarray*}
  \sum_{n=0}^{\infty}{}_{p}\mathbb{M}_{n}^{^{(r)}}(x+z,y;k,a,b;\bar\alpha_{r})\frac{t^{n}}{n!}&=&
\frac{(-1)^{r}t^{rk}2^{r(1-k)}}{\prod\limits^{r-1}_{i=0}(\alpha_{i}b^{t}-a^{t})}e^{(x+z)t}\varphi(y,t)\\
&=&\frac{(-1)^{r}t^{rk}2^{r(1-k)}}{\prod\limits^{r-1}_{i=0}(\alpha_{i}b^{t}-a^{t})}e^{xt}\varphi(y,t) e^{zt}\\
&=&\sum_{m=0}^{\infty}{}_{p}\mathbb{M}_{n}^{(r)}(x,y;k,a,b;\bar\alpha_{r})\frac{t^{m}}{m!}\; \sum_{n=0}^{\infty}z^{n}\frac{t^{n}}{n!}\\
&=&\sum_{n=0}^{\infty}\sum_{m=0}^{\infty}{}_{p}\mathbb{M}_{n}^{^{(r)}}(x,y;k,a,b;\bar\alpha_{r}) z^{n}\frac{t^{n+m}}{n!\;m!}.
\end{eqnarray*}
which on replacing $n$ by $n-m$ in the r.h.s. and then equating the coefficients of the same powers of $t$ in both sides of the
last equation yields \eqref{26q}.
\end{proof}
\begin{theorem}
The following implicit summation formula for ${}_{p}\mathbb{M}_{n}^{^{(r)}}(x,y;k,a,b;\bar\alpha_{r}) $ in terms of generalized Apostol type polynomial $\mathbb{M}_{n}^{^{(r)}}(x;k,a,b;\bar\alpha_{r})$ is obtained:
\begin{equation}\label{27}
{}_{p}\mathbb{M}_{n}^{^{(r)}}(x+z,y;k,a,b;\bar\alpha_{r})=\sum_{k=0}^{n}\binom{n}{k}\mathbb{M}_{n}^{^{(r)}}(x;k,a,b;\bar\alpha_{r})p_{n-k}(z,y).
\end{equation}
\end{theorem}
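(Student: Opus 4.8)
The plan is to argue exactly as in the proof of the previous implicit summation formula, reading the identity off a single regrouping of the generating function \eqref{13}. First I would substitute $x\mapsto x+z$ in \eqref{13}, obtaining
\begin{equation*}
\sum_{n=0}^{\infty}{}_{p}\mathbb{M}_{n}^{(r)}(x+z,y;k,a,b;\bar\alpha_{r})\frac{t^{n}}{n!}=\frac{(-1)^{r}t^{rk}2^{r(1-k)}}{\prod_{i=0}^{r-1}(\alpha_{i}b^{t}-a^{t})}\,e^{(x+z)t}\varphi(y,t).
\end{equation*}

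The crucial observation is that the factor $e^{(x+z)t}\varphi(y,t)$ splits as $\bigl(e^{xt}\bigr)\bigl(e^{zt}\varphi(y,t)\bigr)$, which lets me attach $e^{xt}$ to the Apostol-type prefactor while keeping $\varphi(y,t)$ with the remaining exponential. The right-hand side then becomes a product of two already-known generating functions: the first, $\tfrac{(-1)^{r}t^{rk}2^{r(1-k)}}{\prod_{i=0}^{r-1}(\alpha_{i}b^{t}-a^{t})}e^{xt}$, is precisely the generating function of $\mathbb{M}_{m}^{(r)}(x;k,a,b;\bar\alpha_{r})$ (the display carrying the convergence condition \eqref{11}), while the second, $e^{zt}\varphi(y,t)$, is the generating function \eqref{1} of the 2VGP $p_{m}(z,y)$.

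I would then write the product as $\bigl(\sum_{m\ge0}\mathbb{M}_{m}^{(r)}(x;k,a,b;\bar\alpha_{r})\tfrac{t^{m}}{m!}\bigr)\bigl(\sum_{j\ge0}p_{j}(z,y)\tfrac{t^{j}}{j!}\bigr)$ and apply the Cauchy product rule, so that the coefficient of $t^{n}/n!$ equals $\sum_{m=0}^{n}\binom{n}{m}\mathbb{M}_{m}^{(r)}(x;k,a,b;\bar\alpha_{r})\,p_{n-m}(z,y)$. Equating this with the coefficient of $t^{n}/n!$ on the left-hand side yields \eqref{27}.

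There is essentially no analytic obstacle here; the argument is a routine coefficient comparison entirely parallel to the earlier theorem, valid within the common region of convergence given in \eqref{11}. The only point demanding care is the bookkeeping of indices: since the letter $k$ already denotes a fixed parameter of the family, the summation index in \eqref{27} should be a fresh symbol, say $m$, and it is that index --- not $n$ --- which must appear as the subscript of $\mathbb{M}$ on the right-hand side, with the complementary index $n-m$ carried by $p$. I would also note that the partition $e^{(x+z)t}=e^{xt}e^{zt}$ is the only admissible one, because $\varphi(y,t)$ must remain grouped with a single exponential in order to reproduce a genuine 2VGP rather than being split across both factors.
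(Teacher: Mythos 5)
Your proposal follows exactly the paper's own argument: substitute $x\mapsto x+z$ in \eqref{13}, regroup the right-hand side as $\bigl(\text{Apostol prefactor}\cdot e^{xt}\bigr)\bigl(e^{zt}\varphi(y,t)\bigr)$, recognize the two factors as the generating functions of $\mathbb{M}_{m}^{(r)}(x;k,a,b;\bar\alpha_{r})$ and $p_{j}(z,y)$, and finish by the Cauchy product rule and coefficient comparison. Your closing remark about the indices is also well taken --- the paper's statement and proof both carry the subscript $n$ on $\mathbb{M}$ inside the sum where the summation index (your $m$) should appear, and your bookkeeping correction $\sum_{m=0}^{n}\binom{n}{m}\mathbb{M}_{m}^{(r)}(x;k,a,b;\bar\alpha_{r})\,p_{n-m}(z,y)$ is what the Cauchy product actually yields.
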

\begin{proof}
Replacement of $x$ by $x+z$ in generating function \eqref{13}, we have
\begin{eqnarray*}
  \sum_{n=0}^{\infty}{}_{p}\mathbb{M}_{n}^{^{(r)}}(x+z,y;k,a,b;\bar\alpha_{r})\frac{t^{n}}{n!}&=&
\frac{(-1)^{r}t^{rk}2^{r(1-k)}}{\prod\limits^{r-1}_{i=0}(\alpha_{i}b^{t}-a^{t})}e^{(x+z)t}\varphi(y,t)\\
&=&\frac{(-1)^{r}t^{rk}2^{r(1-k)}}{\prod\limits^{r-1}_{i=0}(\alpha_{i}b^{t}-a^{t})}e^{xt}\varphi(y,t) e^{zt}\\
&=&\sum_{n=0}^{\infty}\mathbb{M}_{n}^{(r)}(x;k,a,b;\bar\alpha_{r})\frac{t^{n}}{n!}\; \sum_{k=0}^{\infty}p_{k}(z,y)\frac{t^{k}}{k!}.
\end{eqnarray*}
By applying Cauchy-product rule, we get
\begin{equation*}
   \sum_{n=0}^{\infty}{}_{p}\mathbb{M}_{n}^{^{(r)}}(x+z,y;k,a,b;\bar\alpha_{r})\frac{t^{n}}{n!}=
\sum_{n=0}^{\infty}\sum_{k=0}^{n}\binom{n}{k}\mathbb{M}_{n}^{^{(r)}}(x;k,a,b;\bar\alpha_{r})\;p_{n-k}(z,y) \frac{t^{n}}{n!}.
\end{equation*}
Equating the coefficients of $t^{n} $ on both sides, yields \eqref{27}.
\end{proof}

\begin{theorem}
Let $a,\; b > 0$ and $a\neq b$. Then for $x, y, z \in \textbf{R}$ and $n \geq 0$.
The following implicit summation formula for ${}_{p}\mathbb{M}_{n}^{^{(r)}}(x,y;k,a,b;\bar\alpha_{r}) $  holds true:
\begin{equation}\label{26}
{}_{p}\mathbb{M}_{n+m}^{^{(r)}}(z,y;k,a,b;\bar\alpha_{r})=\sum_{p,q=0}^{n,m}\binom{n}{p}\binom{m}{q}(z-x)^{p+q}{}_{p}\mathbb{M}_{n+m-p-q}^{^{(r)}}(x,y;k,a,b;\bar\alpha_{r}).
\end{equation}
\end{theorem}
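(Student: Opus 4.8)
The plan is to exploit the fact that the first argument $x$ enters the defining generating function \eqref{13} only through the factor $e^{xt}$, so that shifting $x$ to $z$ amounts to multiplying by a pure exponential. To manufacture the combined index $n+m$ appearing on the left of \eqref{26}, I would first pass to a two-variable generating function by replacing $t$ by $t+u$ in \eqref{13}.

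First I would substitute $t\to t+u$ in \eqref{13} and invoke the elementary reindexing identity
\begin{equation*}
\sum_{N=0}^{\infty}f(N)\frac{(t+u)^{N}}{N!}=\sum_{n=0}^{\infty}\sum_{m=0}^{\infty}f(n+m)\frac{t^{n}u^{m}}{n!\,m!},
\end{equation*}
applied with $f(N)={}_{p}\mathbb{M}_{N}^{(r)}(x,y;k,a,b;\bar\alpha_{r})$. This rewrites the left-hand side as a genuine double series in $t$ and $u$ whose $(n,m)$-coefficient carries ${}_{p}\mathbb{M}_{n+m}^{(r)}(x,y;k,a,b;\bar\alpha_{r})$, while the right-hand side becomes the full prefactor of \eqref{13} evaluated at $t+u$, namely $\tfrac{(-1)^{r}(t+u)^{rk}2^{r(1-k)}}{\prod_{i=0}^{r-1}(\alpha_{i}b^{t+u}-a^{t+u})}\,\varphi(y,t+u)\,e^{x(t+u)}$.

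The crux of the argument is the observation that this prefactor together with $\varphi(y,t+u)$ is entirely independent of the first argument, so the identical computation carried out with $z$ in place of $x$ differs only by the ratio $e^{(z-x)(t+u)}$. Hence I obtain
\begin{equation*}
\sum_{n,m}{}_{p}\mathbb{M}_{n+m}^{(r)}(z,y;k,a,b;\bar\alpha_{r})\frac{t^{n}u^{m}}{n!\,m!}=e^{(z-x)(t+u)}\sum_{n,m}{}_{p}\mathbb{M}_{n+m}^{(r)}(x,y;k,a,b;\bar\alpha_{r})\frac{t^{n}u^{m}}{n!\,m!}.
\end{equation*}
Expanding $e^{(z-x)(t+u)}=\big(\sum_{p}(z-x)^{p}t^{p}/p!\big)\big(\sum_{q}(z-x)^{q}u^{q}/q!\big)$ and applying the Cauchy product separately in the $t$- and the $u$-variable, I would equate the coefficient of $t^{n}u^{m}/(n!\,m!)$ on the two sides. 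After clearing the factor $n!\,m!$ and recognizing the two binomial coefficients $\binom{n}{p}$ and $\binom{m}{q}$, this produces exactly \eqref{26}.

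Once the exponential-shift observation is in place the argument is essentially bookkeeping; the only genuine care needed is the double reindexing — the substitution $t\to t+u$ in the first step and the twofold Cauchy product in the last — together with keeping the summation indices $p,q$ typographically distinct from the prefix label $p$ in ${}_{p}\mathbb{M}$. I would emphasize that the prefactor and $\varphi(y,t+u)$ never have to be expanded explicitly, since they are absorbed wholesale into the generating-function coefficients ${}_{p}\mathbb{M}_{n+m}^{(r)}$; this absorption is precisely what keeps the proof short and is the feature I expect to be the conceptual (if not technical) crux.
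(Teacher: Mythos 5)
Your proposal is correct and follows essentially the same route as the paper: substitute $t\to t+u$, apply the reindexing rule $\sum_{N}f(N)(t+u)^{N}/N!=\sum_{n,m}f(n+m)\,t^{n}u^{m}/(n!\,m!)$, exploit that the prefactor and $\varphi(y,t+u)$ are independent of the first argument so that passing from $x$ to $z$ contributes only the factor $e^{(z-x)(t+u)}$, then expand that exponential and compare coefficients of $t^{n}u^{m}$. The only cosmetic difference is that you expand $e^{(z-x)(t+u)}$ as a product of two single-variable series with two Cauchy products, whereas the paper applies its reindexing rule to it directly; the computations are identical.
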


\begin{proof}
we replace $t$ by $t+u$ in the generating function \eqref{13} and using the following rule \cite{sri}
\begin{equation}\label{sr}
\sum_{N=0}^{\infty} f(N)\frac{(x+y)^{N}}{N!}=\sum_{n,m=0}^{\infty}f(m+n)\frac{x^{n}}{n!}\frac{y^{m}}{m!},
\end{equation}
in the left hand side becomes
\begin{equation}\label{28}
 \sum_{n,m=0}^{\infty}{}_{p}\mathbb{M}_{n+m}^{^{(r)}}(x,y;k,a,b;\bar\alpha_{r})\frac{t^{n}}{n!}\frac{u^{m}}{m!}=
\frac{(-1)^{r}(t+u)^{rk}2^{r(1-k)}}{\prod\limits^{r-1}_{i=0}(\alpha_{i}b^{t+u}-a^{t+u})} \,\,e^{x(t+u)}\varphi (y,t+u),
 \end{equation}
Rewriting Eq. \eqref{28} as:
\begin{equation*}
 e^{-x(t+u)}\sum_{n,m=0}^{\infty}{}_{p}\mathbb{M}_{n+m}^{^{(r)}}(x,y;k,a,b;\bar\alpha_{r})\frac{t^{n}}{n!}\frac{u^{m}}{m!}=
\frac{(-1)^{r}(t+u)^{rk}2^{r(1-k)}}{\prod\limits^{r-1}_{i=0}(\alpha_{i}b^{t+u}-a^{t+u})} \,\,\varphi (y,t+u).
 \end{equation*}
Replacing $x$ by $z$ in the above equation and equating the resulting equation
to the above equation, we get
\begin{equation}\label{29}
 e^{(z-x)(t+u)}\sum_{n,m=0}^{\infty}{}_{p}\mathbb{M}_{n+m}^{^{(r)}}(x,y;k,a,b;\bar\alpha_{r})\frac{t^{n}}{n!}\frac{u^{m}}{m!}=
\sum_{n,m=0}^{\infty}{}_{p}\mathbb{M}_{n+m}^{^{(r)}}(z,y;k,a,b;\bar\alpha_{r})\frac{t^{n}}{n!}\frac{u^{m}}{m!}.
 \end{equation}
On expanding exponential function \eqref{29} gives
\begin{equation}\label{30}
 \sum_{N=0}^{\infty}\frac{[(z-x)(t+u)]^{N}}{N!}\sum_{n,m=0}^{\infty}{}_{p}\mathbb{M}_{n+m}^{^{(r)}}(x,y;k,a,b;\bar\alpha_{r})\frac{t^{n}}{n!}\frac{u^{m}}{m!}=
\sum_{n,m=0}^{\infty}{}_{p}\mathbb{M}_{n+m}^{^{(r)}}(z,y;k,a,b;\bar\alpha_{r})\frac{t^{n}}{n!}\frac{u^{m}}{m!},
 \end{equation}
using Eq. \eqref{sr} in the l.h.s. of Eq. \eqref{30}, we find
\begin{equation}\label{31}
 \sum_{p,q=0}^{\infty}\frac{(z-x)^{p+q}\;t^{p}\;u^{q}}{p!\;q!}\sum_{n,m=0}^{\infty}{}_{p}\mathbb{M}_{n+m}^{^{(r)}}(x,y;k,a,b;\bar\alpha_{r})\frac{t^{n}}{n!}\frac{u^{m}}{m!}=
\sum_{n,m=0}^{\infty}{}_{p}\mathbb{M}_{n+m}^{^{(r)}}(z,y;k,a,b;\bar\alpha_{r})\frac{t^{n}}{n!}\frac{u^{m}}{m!}.
 \end{equation}
Now replacing $n$ by $n-p$, $m$ by $m-q$ and using Cauchy-product rule in the left hand side of \eqref{31}, we get
\begin{equation}\label{32}
 \sum_{n,m=0}^{\infty}\sum_{p,q=0}^{n,m}\frac{(z-x)^{p+q}}{p!\;q!}{}_{p}\mathbb{M}_{n+m-p-q}^{^{(r)}}(x,y;k,a,b;\bar\alpha_{r})\frac{t^{n}}{(n-p)!}\frac{u^{m}}{(m-q)!}=
\sum_{n,m=0}^{\infty}{}_{p}\mathbb{M}_{n+m}^{^{(r)}}(z,y;k,a,b;\bar\alpha_{r})\frac{t^{n}}{n!}\frac{u^{m}}{m!}.
 \end{equation}
Finally, on equating the coefficients of the like powers of t and u in the above
equation, yields\eqref{26}.
\end{proof}

\begin{theorem}
The following implicit summation formula for ${}_{p}\mathbb{M}_{n}^{^{(r)}}(x,y;k,a,b;\bar\alpha_{r}) $  holds true:
\begin{equation}\label{33}
{}_{p}\mathbb{M}_{n}^{^{(r)}}(x+1,y;k,a,b;\bar\alpha_{r})=\sum_{m=0}^{n}\binom{n}{m}{}_{p}\mathbb{M}_{n-m}^{^{(r)}}(x,y;k,a,b;\bar\alpha_{r}).
\end{equation}
\end{theorem}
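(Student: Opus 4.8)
The plan is to derive \eqref{33} from the generating function \eqref{13} by the same generating-function technique used in the preceding theorems of this section, the only new ingredient being the factorization of the shifted exponential. First I would replace $x$ by $x+1$ in \eqref{13}, obtaining
\begin{equation*}
\sum_{n=0}^{\infty}{}_{p}\mathbb{M}_{n}^{^{(r)}}(x+1,y;k,a,b;\bar\alpha_{r})\frac{t^{n}}{n!}=\frac{(-1)^{r}t^{rk}2^{r(1-k)}}{\prod\limits^{r-1}_{i=0}(\alpha_{i}b^{t}-a^{t})}\,e^{(x+1)t}\varphi(y,t).
\end{equation*}

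The decisive observation is that $e^{(x+1)t}=e^{xt}\,e^{t}$, so the right-hand side splits as the generating function of ${}_{p}\mathbb{M}_{n}^{^{(r)}}(x,y;k,a,b;\bar\alpha_{r})$ multiplied by $e^{t}$. Writing $e^{t}=\sum_{j=0}^{\infty}t^{j}/j!$, the right-hand side becomes the product
\begin{equation*}
\left(\sum_{n=0}^{\infty}{}_{p}\mathbb{M}_{n}^{^{(r)}}(x,y;k,a,b;\bar\alpha_{r})\frac{t^{n}}{n!}\right)\left(\sum_{j=0}^{\infty}\frac{t^{j}}{j!}\right).
\end{equation*}
Applying the Cauchy-product rule, exactly as in the earlier theorems, and collecting the coefficient of $t^{n}/n!$ gives
\begin{equation*}
\sum_{n=0}^{\infty}\left(\sum_{m=0}^{n}\binom{n}{m}{}_{p}\mathbb{M}_{n-m}^{^{(r)}}(x,y;k,a,b;\bar\alpha_{r})\right)\frac{t^{n}}{n!},
\end{equation*}
and equating the coefficients of $t^{n}$ on both sides then yields \eqref{33}.

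There is no genuine obstacle here: the argument is a one-line manipulation of the generating function followed by a standard convolution. The single point demanding care is the factorial bookkeeping. Since $e^{t}$ contributes the coefficient $1/j!$ rather than $1$, it is the \emph{exponential} Cauchy product that is required, and it is precisely this normalization that produces the binomial factor $\binom{n}{m}$ in the final identity; handling the two series as ordinary (rather than exponential) generating functions would yield the wrong, unweighted sum. I would therefore keep every series written in the $\frac{t^{n}}{n!}$ form throughout, so that the weight $\binom{n}{m}$ appears automatically upon forming the product.
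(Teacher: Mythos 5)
Your proposal is correct and follows exactly the paper's own proof: replace $x$ by $x+1$ in the generating function \eqref{13}, factor $e^{(x+1)t}=e^{xt}e^{t}$, expand $e^{t}$ as a series, apply the Cauchy-product rule, and equate coefficients of $t^{n}$. Your added remark about keeping everything in exponential-generating-function form to produce the $\binom{n}{m}$ weight is a sound clarification of the same argument, not a departure from it.
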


\begin{proof}
From Eq. \eqref{13}, we have
\begin{eqnarray*}
% \nonumber to remove numbering (before each equation)
 \sum_{n=0}^{\infty} {}_{p}\mathbb{M}_{n}^{^{(r)}}(x+1,y;k,a,b;\bar\alpha_{r})\frac{t^{n}}{n!} &=& \frac{(-1)^{r}t^{rk}2^{r(1-k)}}{\prod\limits^{r-1}_{i=0}(\alpha_{i}b^{t}-a^{t})} \,\,e^{(x+1)t}\varphi (y,t), \\
   &=&\frac{(-1)^{r}t^{rk}2^{r(1-k)}}{\prod\limits^{r-1}_{i=0}(\alpha_{i}b^{t}-a^{t})} \,\,e^{xt}\varphi (y,t)\;e^{t}, \\
   &=& \sum_{n=0}^{\infty} {}_{p}\mathbb{M}_{n}^{^{(r)}}(x,y;k,a,b;\bar\alpha_{r})\frac{t^{n}}{n!}\sum_{m=0}^{\infty}\frac{t^{m}}{m!}.
\end{eqnarray*}
By using Cauchy-product rule, then
\begin{equation*}
   \sum_{n=0}^{\infty} {}_{p}\mathbb{M}_{n}^{^{(r)}}(x+1,y;k,a,b;\bar\alpha_{r})\frac{t^{n}}{n!} = \sum_{n=0}^{\infty}\sum_{m=0}^{n} \binom{n}{m} {}_{p}\mathbb{M}_{n-m}^{^{(r)}}(x,y;k,a,b;\bar\alpha_{r})\frac{t^{n}}{n!}.
\end{equation*}
Equating the coefficients of $t^{n} $ on both sides, yields \eqref{33}.
\end{proof}

\begin{theorem}
The following implicit summation formula for ${}_{p}\mathbb{M}_{n}^{^{(r)}}(x,y;k,a,b;\bar\alpha_{r}) $  holds true:
\begin{equation}\label{34}
{}_{p}\mathbb{M}_{n}^{^{(r)}}(x+z,y;k,a,b;\bar\alpha_{r})=\sum_{m=0}^{n}\binom{n}{m} \mathbb{M}_{n-m}^{^{(r)}}(z;k,a,b;\bar\alpha_{r})p_{m}(x,y).
\end{equation}
\end{theorem}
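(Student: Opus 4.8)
The plan is to mirror the proof of \eqref{27}, changing only how the exponential factor is distributed: here the shift $z$ should be absorbed into the one-variable polynomial $\mathbb{M}_{n-m}^{(r)}(z;k,a,b;\bar\alpha_{r})$, while $x$ and $y$ are carried by the 2-variable general polynomial $p_{m}(x,y)$.

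First I would replace $x$ by $x+z$ in the generating function \eqref{13}, giving
\begin{equation*}
\sum_{n=0}^{\infty}{}_{p}\mathbb{M}_{n}^{(r)}(x+z,y;k,a,b;\bar\alpha_{r})\frac{t^{n}}{n!}=\frac{(-1)^{r}t^{rk}2^{r(1-k)}}{\prod\limits^{r-1}_{i=0}(\alpha_{i}b^{t}-a^{t})}\,e^{(x+z)t}\varphi(y,t).
\end{equation*}
Then I would factor $e^{(x+z)t}=e^{zt}\,e^{xt}$ and regroup the right-hand side as
\begin{equation*}
\left[\frac{(-1)^{r}t^{rk}2^{r(1-k)}}{\prod\limits^{r-1}_{i=0}(\alpha_{i}b^{t}-a^{t})}\,e^{zt}\right]\Bigl[e^{xt}\varphi(y,t)\Bigr].
\end{equation*}
By the defining relation \eqref{11}, the first bracket is the generating function of $\mathbb{M}_{n}^{(r)}(z;k,a,b;\bar\alpha_{r})$, and by \eqref{1} the second bracket is the generating function of $p_{m}(x,y)$. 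Substituting both expansions turns the right-hand side into the product
\begin{equation*}
\sum_{n=0}^{\infty}\mathbb{M}_{n}^{(r)}(z;k,a,b;\bar\alpha_{r})\frac{t^{n}}{n!}\;\sum_{m=0}^{\infty}p_{m}(x,y)\frac{t^{m}}{m!}.
\end{equation*}

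I would then apply the Cauchy-product rule to merge the two series into $\sum_{n=0}^{\infty}\left(\sum_{m=0}^{n}\binom{n}{m}\mathbb{M}_{n-m}^{(r)}(z;k,a,b;\bar\alpha_{r})\,p_{m}(x,y)\right)\frac{t^{n}}{n!}$, and finally equate the coefficients of $t^{n}/n!$ on both sides to obtain \eqref{34}.

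This argument is entirely routine and structurally identical to the proof of \eqref{27}, so I expect no genuine obstacle. The one point demanding care is purely bookkeeping: the factor $e^{zt}$---not $e^{xt}$---must be attached to the rational factor, so that $z$ ends up as the argument of the one-variable polynomial $\mathbb{M}_{n-m}^{(r)}$ while the pair $(x,y)$ remains with $p_{m}$; attaching the exponentials the other way around would instead reproduce \eqref{27}.
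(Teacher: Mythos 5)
Your proposal is correct and follows essentially the same route as the paper's own proof: replace $x$ by $x+z$ in \eqref{13}, split $e^{(x+z)t}=e^{zt}\,e^{xt}$, attach $e^{zt}$ to the rational factor so that it generates $\mathbb{M}_{n}^{(r)}(z;k,a,b;\bar\alpha_{r})$ while $e^{xt}\varphi(y,t)$ generates $p_{m}(x,y)$, then apply the Cauchy product rule and equate coefficients. Your closing remark about which exponential must go with the rational factor is exactly the point that distinguishes \eqref{34} from \eqref{27}, and the paper handles it the same way.
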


\begin{proof}
From Eq. \eqref{13}, we have
\begin{eqnarray*}
% \nonumber to remove numbering (before each equation)
 \sum_{n=0}^{\infty} {}_{p}\mathbb{M}_{n}^{^{(r)}}(x+z,y;k,a,b;\bar\alpha_{r})\frac{t^{n}}{n!} &=& \frac{(-1)^{r}t^{rk}2^{r(1-k)}}{\prod\limits^{r-1}_{i=0}(\alpha_{i}b^{t}-a^{t})} \,\,e^{(x+z)t}\varphi (y,t), \\
   &=&\frac{(-1)^{r}t^{rk}2^{r(1-k)}}{\prod\limits^{r-1}_{i=0}(\alpha_{i}b^{t}-a^{t})} \,\,e^{zt} \; e^{xt}\varphi (y,t), \\
   &=& \sum_{n=0}^{\infty} \mathbb{M}_{n}^{^{(r)}}(z;k,a,b;\bar\alpha_{r})\frac{t^{n}}{n!}\sum_{m=0}^{\infty} p_{m}(x,y)\frac{t^{m}}{m!}.
\end{eqnarray*}
By using Cauchy-product rule, then
\begin{equation*}
   \sum_{n=0}^{\infty} {}_{p}\mathbb{M}_{n}^{^{(r)}}(x+z,y;k,a,b;\bar\alpha_{r})\frac{t^{n}}{n!} = \sum_{n=0}^{\infty}\sum_{m=0}^{n} \binom{n}{m} \mathbb{M}_{n-m}^{^{(r)}}(z;k,a,b;\bar\alpha_{r})\;p_{m}(x,y)\frac{t^{n}}{n!}.
\end{equation*}
Equating the coefficients of $t^{n} $ on both sides, yields \eqref{34}.
\end{proof}

\section{Symmetry identity}
\begin{theorem}\hfill\\
Let $c,d >0$ and $n\geq 0.$ For $x,y\in \mathbb{R},$ then the following identity holds true:
\begin{equation*}
 \sum_{m=0}^{n}\binom{n}{m} d^{m}c^{n-m}\; {}_{p}\mathbb{M}_{n-m}^{(r)}(dx,y;k,a,b;\bar{\alpha_{r}})\; {}_{p}\mathbb{M}_{m}^{(r)}(cx,y;k,a,b;\bar{\alpha_{r}})
 \end{equation*}
 \begin{equation}\label{35}
 = \sum_{m=0}^{n}\binom{n}{m}c^{m} d^{n-m} \; {}_{p}\mathbb{M}_{n-m}^{(r)}(cx,y;k,a,b;\bar{\alpha_{r}})\; {}_{p}\mathbb{M}_{m}^{(r)}(dx,y;k,a,b;\bar{\alpha_{r}}).
\end{equation}
\end{theorem}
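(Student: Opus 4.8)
The plan is to exhibit a single generating function in $t$ that is manifestly invariant under the interchange $c \leftrightarrow d$, and then to read off the coefficient of $t^{n}/n!$ in two different factorizations: one grouping reproduces the left-hand side of \eqref{35}, and the swapped grouping reproduces the right-hand side. This is the standard route for such symmetric identities, and it reduces the whole statement to the commutativity together with the $c\leftrightarrow d$ symmetry of one explicit kernel.

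Concretely, I would first consider the product
\[
G(t) = \frac{(-1)^{2r}\,(cd)^{rk}\,t^{2rk}\,2^{2r(1-k)}}{\prod_{i=0}^{r-1}(\alpha_i b^{ct}-a^{ct})\;\prod_{i=0}^{r-1}(\alpha_i b^{dt}-a^{dt})}\; e^{2cdxt}\,\varphi(y,ct)\,\varphi(y,dt),
\]
and observe that each of its factors is unchanged when $c$ and $d$ are interchanged: the monomial $(cd)^{rk}t^{2rk}$, the exponential $e^{2cdxt}$, the product $\varphi(y,ct)\varphi(y,dt)$, and the product of the two denominators are all symmetric in $c,d$. Hence $G(t)$ is symmetric under $c \leftrightarrow d$. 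Next I would factor $G(t)=A(t)\,B(t)$, where, applying the defining generating function \eqref{13} with $x$ replaced by $dx$ and $t$ replaced by $ct$,
\[
A(t) = \frac{(-1)^{r}(ct)^{rk}2^{r(1-k)}}{\prod_{i=0}^{r-1}(\alpha_i b^{ct}-a^{ct})}\,e^{cdxt}\varphi(y,ct)=\sum_{n=0}^{\infty}{}_{p}\mathbb{M}_{n}^{(r)}(dx,y;k,a,b;\bar\alpha_r)\frac{(ct)^n}{n!},
\]
and symmetrically, with $x$ replaced by $cx$ and $t$ replaced by $dt$,
\[
B(t) = \frac{(-1)^{r}(dt)^{rk}2^{r(1-k)}}{\prod_{i=0}^{r-1}(\alpha_i b^{dt}-a^{dt})}\,e^{cdxt}\varphi(y,dt)=\sum_{m=0}^{\infty}{}_{p}\mathbb{M}_{m}^{(r)}(cx,y;k,a,b;\bar\alpha_r)\frac{(dt)^m}{m!}.
\]

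Then I would apply the Cauchy-product rule to $A(t)B(t)$ and collect the coefficient of $t^{n}/n!$; since $A(t)=\sum_n c^{n}\,{}_{p}\mathbb{M}_{n}^{(r)}(dx,y;\dots)\,t^n/n!$ and $B(t)=\sum_m d^{m}\,{}_{p}\mathbb{M}_{m}^{(r)}(cx,y;\dots)\,t^m/m!$, this coefficient is precisely the left-hand side of \eqref{35}. Finally I would invoke the symmetry of $G(t)$: interchanging $c$ and $d$ leaves $G(t)=A(t)B(t)$ unchanged, but it turns the factorization into the one whose Cauchy product yields the right-hand side of \eqref{35}. Equating the two expressions for the coefficient of $t^{n}/n!$ in the one function $G(t)$ then gives \eqref{35}. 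The only step requiring genuine care — and the one I expect to be the main bookkeeping obstacle — is the splitting of the symmetric kernel into the two factors $A(t)$ and $B(t)$, namely distributing $e^{2cdxt}=e^{cdxt}\cdot e^{cdxt}$ and the two $\varphi$ factors so that each factor carries exactly the argument scaling that reproduces ${}_{p}\mathbb{M}^{(r)}(dx,y;\dots)$ and ${}_{p}\mathbb{M}^{(r)}(cx,y;\dots)$ through \eqref{13}; once this matching is verified, the identity is immediate from commutativity of the product and the symmetry of $G(t)$.
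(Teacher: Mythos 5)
Your proposal is correct and follows essentially the same route as the paper: both construct a kernel $G(t)$ symmetric in $c$ and $d$, factor it via the generating function \eqref{13} into two series with arguments $ct$ and $dt$, take the Cauchy product, and compare coefficients of $t^{n}/n!$ under the interchange $c\leftrightarrow d$. Your only (cosmetic) improvement is absorbing the factor $(cd)^{rk}$ into the numerator of $G(t)$ so that the factorization $G=AB$ is exact, whereas the paper carries a $1/(cd)^{rk}$ prefactor (misprinted there as $1/(ab)^{rk}$).
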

\begin{proof}
Let
\begin{equation*}
  G(t)=\frac{((-1)^{r}2^{r(1-k)}t^{rk})^{2}}{(\prod\limits_{i=0}^{r-1}(\alpha_{i}b^{dt}-a^{dt}))
  (\prod\limits_{i=0}^{r-1}(\alpha_{i}b^{ct}-a^{ct}))}\; e^{2cdxt}\varphi(y,dt)\varphi(y,ct).
\end{equation*}
Then the expression for $G(t)$ is symmetric in $c$ and $d$ and we can expand $G(t)$ into series in two ways\\
Firstly
\begin{eqnarray*}
% \nonumber to remove numbering (before each equation)
  G(t) &=& \frac{1}{(cd)^{rk}}\left(\frac{(-1)^{r}2^{r(1-k)}(dt)^{rk}}{\prod\limits_{i=0}^{r-1}(\alpha_{i}b^{dt}-a^{dt})}\right)\; e^{cxdt}\left(\frac{(-1)^{r}2^{r(1-k)}(ct)^{rk}}{\prod\limits_{i=0}^{r-1}(\alpha_{i}b^{ct}-a^{ct})}\right)\; e^{dxct}\\
 &=& \frac{1}{(ab)^{rk}}\sum_{n=0}^{\infty} {}_{p}\mathbb{M}_{n}^{(r)}(cx,y;k,a,b;\bar{\alpha_{r}})\frac{(dt)^{n}}{n!}\sum_{m=0}^{\infty} {}_{p}\mathbb{M}_{n}^{(r)}(dx,y;k,a,b;\bar{\alpha_{r}})\frac{(ct)^{m}}{m!}\\
\end{eqnarray*}
\begin{equation}\label{36}
G(t)= \frac{1}{(ab)^{rk}}\sum_{n=0}^{\infty}\sum_{m=0}^{n} \binom{n}{m} d^{n-m}\;c^{m}\;{}_{p}\mathbb{M}_{n-m}^{(r)}(cx,y;k,a,b;\bar{\alpha_{r}})\;{}_{p}\mathbb{M}_{m}^{(r)}(dx,y;k,a,b;\bar{\alpha_{r}})\frac{(t)^{n}}{n!}.
\end{equation}
Secondly
\begin{equation}\label{37}
  G(t)=\frac{1}{(cd)^{rk}}\sum_{n=0}^{\infty}\sum_{m=0}^{n} \binom{n}{m} c^{n-m}\;d^{m}\;{}_{p}\mathbb{M}_{n-m}^{(r)}(dx,y;k,a,b;\bar{\alpha_{r}})\;{}_{p}\mathbb{M}_{n}^{(r)}(cx,y;k,a,b;\bar{\alpha_{r}})\frac{(t)^{n}}{n!}.
\end{equation}
Form Eq. \eqref{36} and Eq. \eqref{37}, by comparing the coefficients of $t^{n}$ on the both sides, yields \eqref{35}.
\end{proof}

\section{Examples}
 By making suitable choice for the function $\varphi(y, t)$ in equation \eqref{13}, the generating function for the corresponding member belonging to the  ${}_{p}\mathbb{M}_{n}^{^{(r)}}(x,y;k,a,b;\bar\alpha_{r})$ family can be obtained.\\ \\
\textbf{Example 1.}
Taking $\varphi(y, t) = e^{yt^{m}}$ (for which the $p_{n}(x, y)$ reduce to the  $H_{n} ^{(m)}(x, y))$ in the l.h.s. of generating function \eqref{13}, we find that the resultant Gould-Hopper Apostol type polynomials (GHATP), denoted by ${}_{H^{(m)}}\mathbb{M}_{n}^{^{(r)}}(x,y;k,a,b;\bar\alpha_{r})$
in the r.h.s. are defined by the following generating function:
\begin{equation}\label{38}
\sum_{n=0}^{\infty}{}_{H^{(m)}}\mathbb{M}_{n}^{^{(r)}}(x,y;k,a,b;\bar\alpha_{r})\frac{t^{n}}{n!}=\frac{(-1)^{r}t^{rk}2^{r(1-k)}}{\prod\limits^{r-1}_{i=0}(\alpha_{i}b^{t}-a^{t})} \,\,e^{xt+yt^{m}}.
\end{equation}
\begin{remark}
For $m = 2$, the $H_{n} ^{(m)}(x, y)$ reduce to the $H_{n}(x, y)$. Therefore, taking $m = 2$ in Eq. \eqref{38},
we get the following generating function for the 2-variable Hermite Apostol type polynomials, denoted by
${}_{H}\mathbb{M}_{n}^{^{(r)}}(x,y;k,a,b;\bar\alpha_{r})$:
\end{remark}
\begin{equation}\label{39}
\sum_{n=0}^{\infty}{}_{H}\mathbb{M}_{n}^{^{(r)}}(x,y;k,a,b;\bar\alpha_{r})\frac{t^{n}}{n!}=\frac{(-1)^{r}t^{rk}2^{r(1-k)}}{\prod\limits^{r-1}_{i=0}(\alpha_{i}b^{t}-a^{t})} \,\,e^{xt+yt^{2}}.
\end{equation}
\textbf{Example 2.}
Taking $\varphi(y, t) = C_{0}(-yt^{m})$ (for which the $p_{n}(x, y)$ reduce to the  ${}_{m}L_{n}(y,x))$ in the l.h.s. of generating function \eqref{13}, we find that the resultant 2-variable generalized Laguerre Apostol type polynomials, denoted by ${}_{mL}\mathbb{M}_{n}^{^{(r)}}(y,x;k,a,b;\bar\alpha_{r})$ in the r.h.s. are defined by the following generating function:
\begin{equation}\label{40}
\sum_{n=0}^{\infty}{}_{mL}\mathbb{M}_{n}^{^{(r)}}(y,x;k,a,b;\bar\alpha_{r})\frac{t^{n}}{n!}=\frac{(-1)^{r}t^{rk}2^{r(1-k)}}{\prod\limits^{r-1}_{i=0}(\alpha_{i}b^{t}-a^{t})} \,\,e^{xt}C_{0}(-yt^{m}).
\end{equation}
\begin{remark}
Since for $m = 1$ and $y\longrightarrow -y$, the ${}_{m}L_{n}(x, y)$ reduce to the $L_{n}(x, y)$. Therefore, taking $m = 1$  and $y\longrightarrow -y$ in Eq. \eqref{40},
we get the following generating function for the 2-variable Laguerre Apostol type polynomials, denoted by ${}_{L}\mathbb{M}_{n}^{^{(r)}}(y,x;k,a,b;\bar\alpha_{r})$:
\end{remark}
\begin{equation}\label{41}
\sum_{n=0}^{\infty}{}_{L}\mathbb{M}_{n}^{^{(r)}}(y,x;k,a,b;\bar\alpha_{r})\frac{t^{n}}{n!}=\frac{(-1)^{r}t^{rk}2^{r(1-k)}}{\prod\limits^{r-1}_{i=0}(\alpha_{i}b^{t}-a^{t})} \,\,e^{xt}C_{0}(yt).
\end{equation}

\begin{remark}
Since for $x = 1$ , the $L_{n}(x, y)$ reduce to the classical Laguerre polynomials $L_{n}(y)$. Therefore, taking $x = 1$ in Eq. \eqref{41},
we get the following generating function for the Laguerre Apostol type polynomials, denoted by ${}_{L}\mathbb{M}_{n}^{^{(r)}}(y;k,a,b;\bar\alpha_{r})$:
\end{remark}
\begin{equation}\label{42}
\sum_{n=0}^{\infty}{}_{L}\mathbb{M}_{n}^{^{(r)}}(y;k,a,b;\bar\alpha_{r})\frac{t^{n}}{n!}=\frac{(-1)^{r}t^{rk}2^{r(1-k)}}{\prod\limits^{r-1}_{i=0}(\alpha_{i}b^{t}-a^{t})} \,\,e^{t}C_{0}(yt).
\end{equation}

\textbf{Example 3.}
Taking $\varphi(y, t) = \frac{1}{1-yt^{\beta}}$ (for which the $p_{n}(x, y)$ reduce to the $e_{n}^{(\beta)}(x,y))$ in the l.h.s. of generating function \eqref{13}, we find that the resultant 2-variable truncated exponential Apostol type polynomials of order $\beta$, denoted by ${}_{e^{^{(\beta)}}}\mathbb{M}_{n}^{^{(r)}}(x,y;k,a,b;\bar\alpha_{r})$ in the r.h.s. are defined by the following generating function:
\begin{equation}\label{43}
\sum_{n=0}^{\infty}{}_{e^{^{(\beta)}}}\mathbb{M}_{n}^{^{(r)}}(x,y;k,a,b;\bar\alpha_{r})\frac{t^{n}}{n!}=\frac{(-1)^{r}t^{rk}2^{r(1-k)}}{\prod\limits^{r-1}_{i=0}(\alpha_{i}b^{t}-a^{t})} \,\,\left(\frac{e^{xt}}{1-yt^{\beta}}\right).
\end{equation}

\begin{remark}
Since for $\beta = 2$, the $e_{n}^{(\beta)}(x, y)$ of order $\beta$ reduce to the $ {}_{[2]}e_{n}(x, y)$. Therefore, taking $\beta= 2$ in
Eq. \eqref{43}, we get the following generating function for the 2-variable truncated exponential Apostol type polynomials
, denoted by ${}_{{}_{[2]}e}\mathbb{M}_{n}^{^{(r)}}(x,y;k,a,b;\bar\alpha_{r})$
\end{remark}
\begin{equation}\label{44}
\sum_{n=0}^{\infty}{}_{{}_{[2]}e}\mathbb{M}_{n}^{^{(r)}}(x,y;k,a,b;\bar\alpha_{r})\frac{t^{n}}{n!}=\frac{(-1)^{r}t^{rk}2^{r(1-k)}}{\prod\limits^{r-1}_{i=0}(\alpha_{i}b^{t}-a^{t})} \,\,\left(\frac{e^{xt}}{1-yt^{2}}\right).
\end{equation}

\begin{remark}
Since for $y = 1$, the ${}_{[2]}e_{n}(x, y)$ of order $\beta$ reduce to the truncated exponential polynomials $ {}_{[2]}e_{n}(x)$. Therefore, taking $y= 1$ in
Eq. \eqref{44}, we get the following generating function for the truncated exponential polynomials
, denoted by ${}_{{}_{[2]}e}\mathbb{M}_{n}^{^{(r)}}(x;k,a,b;\bar\alpha_{r})$
\end{remark}
\begin{equation}\label{45}
\sum_{n=0}^{\infty}{}_{{}_{[2]}e}\mathbb{M}_{n}^{^{(r)}}(x;k,a,b;\bar\alpha_{r})\frac{t^{n}}{n!}=\frac{(-1)^{r}t^{rk}2^{r(1-k)}}{\prod\limits^{r-1}_{i=0}(\alpha_{i}b^{t}-a^{t})} \,\,\left(\frac{e^{xt}}{1-t^{2}}\right).
\end{equation}

\bibliographystyle{plain}

\begin{thebibliography}{99}
\bibitem{Appell}
P. Appell and J. Kampé de Fériet.
\emph{Fonctions Hypergéométriques et Hypersphériques: Polynômes d’ Hermite}, Gauthier-Villars, Paris, 1926.

\bibitem{Andrews}
 L.C. Andrews.
\emph{ Special Functions for Engineers and Applied Mathematicians}, Macmillan Publishing Company, New York, 1985.


\bibitem{Dattoli2}
 G. Dattoli, S. Lorenzutta, A.M. Mancho and A. Torre.
 \emph{Generalized polynomials and associated operational identities}, J. Comput. Appl. Math. 108 (1–2) (1999) 209–218.


\bibitem{Dattoli1}
 G. Dattoli.
 \emph{Hermite–Bessel and Laguerre–Bessel functions: a by-product of the monomiality principle}, in: Advanced Special Functions and Applications,(Melfi, 1999), in: Proc. Melfi Sch. Adv. Top. Math. Phys., vol. 1, Aracne, Rome, (2000), pp. 147–164.


\bibitem{Dattoli3}
 G. Dattoli, C. Cesarano and D. Sacchetti.
\emph{A note on truncated polynomials}, Appl. Math. Comput. 134 (2003) 595–605.
\bibitem{Dattoli4}
 G. Dattoli, M. Migliorati and H.M. Srivastava,
 \emph{A class of Bessel summation formulas and associated operational methods}, Fract. Appl. Anal. 7 (2) (2004) 169–176.
\bibitem{Gould}
 H.W. Gould and A.T. Hopper.
\emph{Operational formulas connected with two generalization of Hermite polynomials}, Duke Math. J. 29 (1962) 51–63.


\bibitem{Khan}
K. Subuhi and N. Raza.
 \emph{General-Appell polynomials within the context of monomiality principle},
 Int. J. Anal. (2013) 1–11.

\bibitem{Khan2}
K. Subuhi, G. Yasmin and M. Riyasat.
 \emph{Certain results for the 2-variable Apostol type and related polynomials},
 Comput. Math. Appl. 69 (2015) 1367–1382.
  \bibitem{lu3}
Q. M. Luo., H. M. Srivastava.
 \emph{Some generalizations of the Apostol-Bernoulli polynomials and Apostol-Euler polynomials,} J. Math. Anal. Appl. 308(1) (2005) : 290-302.
\bibitem{lu4}
 Q. M. Luo., H. M. Srivastava.
 \emph{Some relationships between the Apostol-Bernoulli polynomials and Apostol-Euler polynomials,} Comput. Math. Appl. 51(3) (2006) : 631-642.

\bibitem{lu1}
 Q. M. Luo.
 \emph{Apostol Euler polynomials of higher order an Gaussian hypergeometric function,} Taiwanese J. Math. 10(4) (2006) : 917-925.

\bibitem{lu2}
 Q. M. Luo.
 \emph{Fourier expansions and integral representations for the Genocchi polynomials,} J. Integer Seq., 12(1) (2009) : 09-1.

 
 \bibitem{lu5}
Q. M. Luo.
 \emph{Extension for the Genocchi polynomials and their fourier expansions and integral representations,} Qsaaka J. Math. 48(2) (2011) : 291-309.

\bibitem{sri}
H. M. Srivastava and H. L. Manocha.
 \emph{ A treatise on generating functions,} Ellis Horwood Limited, New York, (1984).








\end{thebibliography}

%%%%%%%%%%%%%%%%%%%%%%%%%%%%%%%%%%%%%%%%%%%%%%%%%%%%%%%%%%%%%%%%%%%%%%%%%%%%%

\end{document}